\newtheorem{theorem}{Theorem}[section]
\newtheorem{proposition}[theorem]{Proposition}
\newtheorem{question}{Question}
\newdefinition{remark}[theorem]{Remark}
\newdefinition{definition}[theorem]{Definition}
\newdefinition{example}[theorem]{Example}
\newproof{proof}{Proof}
\newproof{poThmOne}{Proof of Theorem \ref{T:intro}}
\numberwithin{equation}{section}
\newcommand{\B}{\mathbb{B}}
\newcommand{\C}{\mathbb{C}}
\newcommand{\mcM}{\mathcal{M}}
\newcommand{\mcH}{\mathcal{H}}
\newcommand{\bfx}{\mathbf{x}}
\newcommand{\bfy}{\mathbf{y}}
\newcommand{\bfT}{\mathbf{T}}
\newcommand{\bfV}{\mathbf{V}}
\newcommand{\bfS}{\mathbf{S}}
\newcommand{\bfU}{\mathbf{U}}
\newcommand{\bfN}{\mathbf{N}}
\newcommand{\bfW}{\mathbf{W}}
\newcommand{\bfX}{\mathbf{X}}
\newcommand{\bfY}{\mathbf{Y}}
\newcommand{\bfI}{\mathbf{I}}
\newcommand{\calH}{\mathcal{H}}
\newcommand{\calK}{\mathcal{K}}
\newcommand{\calV}{\mathcal{V}}
\newcommand{\calJ}{\mathcal{J}}
\newcommand{\inner}[2]{{\langle #1,#2\rangle}}
\newcommand{\binner}[2]{{\Big\langle #1,#2\Big\rangle}}
\journal{Journal of Functional Analysis}
\date{June 8, 2019}
\begin{document}

\begin{frontmatter}



\title{Decomposing algebraic $m$-isometric tuples}


\author{Trieu Le}

\address{Department of Mathematics and Statistics, The University of
Toledo, Toledo, OH 43606}

\ead{trieu.le2@utoledo.edu}

\begin{abstract}
We show that any $m$-isometric tuples of commuting operators on a finite dimensional Hilbert space can be decomposed as a sum of a spherical isometry and a commuting nilpotent tuple. Our approach applies as well to tuples of algebraic operators that are hereditary roots of polynomials in several variables.
\end{abstract}

\begin{keyword}
$m$-isometry \sep nilpotent \sep commuting tuple


\MSC[2010] 47A05 \sep 47A13

\end{keyword}

\end{frontmatter}

\section{Introduction}
\label{S:intro}
The notion of $m$-isometries was introduced and studied by Agler \cite{AglerAJM1990} back in the eighties. A bounded linear operator $T$ on a complex Hilbert space $\mcH$ is called $m$-isometric if it satisfies the operator equation
\begin{align*}
\sum_{k=0}^{m}(-1)^{m-k}\binom{m}{k}T^{*k}T^{k}=0,
\end{align*}
where $T^{*}$ is the adjoint operator of $T$. Equivalently, for all $v\in\mcH$,
\[\sum_{k=0}^{m}(-1)^{m-k}\binom{m}{k}\|T^{k}v\|^2=0.\]
In a series of papers \cite{AglerIEOT1995,AglerIEOT1995b,AglerIEOT1996}, Agler and Stankus gave an extensive study of $m$-isometric operators. It is clear that any $1$-isometric operator is an isometry. Multiplication by $z$ on the Dirichlet space over the unit disk is not an isometry but it is a $2$-isometry. Richter \cite{RichterTAMS1991} showed that any cyclic $2$-isometry arises from multiplication by $z$ on certain Dirichlet-type spaces. Very recently, researchers have been interested in algebraic properties, cyclicity and supercyclicity of $m$-isometries, among other things. See \cite{PatelGMS2002,JablonskiIEOT2002,BotelhoSzeged2010,DuggalLAA2012,MuneoPAMS2013,Faghih-AhmadiRMJM2013,BermudezLAA2013,BermudezJMAA2013,LeJMAA2014,BermudezAAA2014,GuLAA2015} and the references therein.

It was showed by Agler, Helton and Stankus \cite[Section 1.4]{AglerLAA1998} that any $m$-isometry $T$ on a finite dimensional Hilbert space admits a decomposition $T=S+N$, where $S$ is a unitary and $N$ is a nilpotent operator satisfying $SN=NS$. In \cite{BermudezJMAA2013}, it was showed that if $S$ is an isometry on any Hilbert space and $N$ is a nilpotent operator of order $n$ commuting with $S$ then the sum $S+N$ is a strict $(2n-1)$-isometry. This result has been generalized to $m$-isometries by several authors \cite{LeJMAA2014,BermudezAAA2014,GuLAA2015}.

Let $A$ be a positive operator on $\mcH$. An operator $T$ is called an $(A,m)$-isometry if it is a solution to the operator equation
\begin{align*}
\sum_{k=0}^{m}(-1)^{m-k}\binom{m}{k}T^{*k}AT^{k}=0.
\end{align*}
Such operators were introduced and studied by Sid Ahmed and Saddi in \cite{AhmedLAA2012}, then by other authors \cite{DuggalFAAC2012,JungStudia2012,RabaouiAMS2012,HedayatianHMJ2015,Faghih-AhmadiBMMSS2016,BermudezLAA2018}. In the case $m=1$, we call such operators $A$-isometries. Since $A$ is positive, the map $v\mapsto \|v\|_{A} := \inner{Av}{v}$ (where $\inner{\cdot}{\cdot}$ denotes the inner product on $\mcH$) gives rise to a seminorm. In the case $A$ is injective, $\|\cdot\|_{A}$ becomes a norm. It follows that an operator $T$ is $(A,m)$-isometric if and only if $T$ is $m$-isometric with respect to $\|\cdot\|_{A}$. As a result, several algebraic properties of $(A,m)$-isometries follow from the corresponding properties of $m$-isometries with more or less similar proofs (see \cite{AhmedLAA2012, BermudezLAA2018}). However, there are great differences between $(A,m)$-isometries and $m$-isometries, specially when $A$ is not injective. For example, it is known \cite{AglerIEOT1995} that the spectrum of an $m$-isometry must either be a subset of the unit circle or the entire closed unit disk. On the other hand, \cite[Theorem~2.3]{BermudezLAA2018} shows that for any compact set $K$ on the plane that intersects the unit circle, there exist a non-zero positive operator $A$ and an $(A,1)$-isometry whose spectrum is exactly $K$. The following question was asked in \cite{BermudezLAA2018}.

\begin{question} Let $T$ be an $(A,m)$-isometry on a finite dimensional Hilbert space. Is it possible to write $T$ as a sum of an $A$-isometry and a commuting nilpotent operator?
\end{question}
In this paper, we shall answer Question 1 in the affirmative. Indeed, we are able to prove a much more general result, in the setting of multivariable operator theory.

Gleason and Richter \cite{GleasonIEOT2006} considered the multivariable setting of $m$-isometries and studied their properties. A commuting $d$-tuple of operators $\bfT=[T_1,\ldots, T_d]$ is said to be an $m$-isometry if it satisfies the operator equation
\begin{align}
\label{L:m_isometric_tuples}
\sum_{k=0}^{m}(-1)^{m-k}\binom{m}{k}\sum_{|\alpha|=k}\frac{k!}{\alpha!}\,(T^{\alpha})^{*}T^{\alpha}=0.
\end{align}
Here $\alpha=(\alpha_1,\ldots,\alpha_d)$ denotes a multiindex of non-negative integers. We have also used the standard multiindex notation: $|\alpha|=\alpha_1+\cdots+\alpha_d$, $\alpha!=\alpha_1!\cdots\alpha_d!$ and $\bfT^{\alpha}=T_1^{\alpha_1}\cdots T_d^{\alpha_{d}}$. It was shown in \cite{GleasonIEOT2006} that the $d$-shift on the Drury-Arveson space over the unit ball in $\C^d$ is $d$-isometric. This generalizes the single-variable fact that the unilateral shift on the Hardy space $H^2$ over the unit disk is an isometry. Gleason and Richter also studied spectral properties of $m$-isometric tuples and they constructed a list of examples of such operators, built from single-variable $m$-isometries. Many algebraic properties of $m$-isometric tuples have been discovered by the author in an unpublished work and independently by Gu \cite{GuJKMS2018}. As an application of our main result in this note, we shall answer the following question in the affirmative.

\begin{question}
Let $\bfT$ be an $m$-isometric tuple acting on a finite dimensional Hilbert space. Is it possible to write $\bfT$ as a sum of a $1$-isometric $\bfS$ (that is, a spherical isometry) and a nilpotent tuple $\bfN$ that commutes with $\bfS$?
\end{question}

To state our main result, we first generalize the notion of $(A,m)$-isometric operators to tuples. Let $A$ be any bounded operator on $\mcH$ (we do not need to assume that $A$ is positive). A commuting tuple $\bfT=[T_1, \ldots, T_d]$ is said to be $(A,m)$-isometric if
\begin{align}
\label{L:A_m_isometric_tuples}
\sum_{k=0}^{m}(-1)^{m-k}\binom{m}{k}\sum_{|\alpha|=k}\frac{k!}{\alpha!}\,(T^{\alpha})^{*}AT^{\alpha}=0.
\end{align}
It is clear that $(I,m)$-isometric tuples (here $I$ stands for the identity operator) are the same as $m$-isometric tuples. We shall call $(A,1)$-isometric tuples \textit{spherical $A$-isometric}. They are tuples $\bfT$ that satisfies
\[T_1^{*}AT_1 + \cdots + T_d^{*}AT_d = A.\]

A main result in the paper is the following theorem.

\begin{theorem}
\label{T:intro}
Suppose $\bfT$ is an $(A,m)$-isometric tuple on a finite dimensional Hilbert space. Then there exist a  spherical $A$-isometric tuple $\bfS$ and a nilpotent tuple $\bfN$ commuting with $\bfS$ such that $\bfT=\bfS+\bfN$.
\end{theorem}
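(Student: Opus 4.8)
The plan is to deduce the theorem from two applications of the Jordan--Chevalley (additive Jordan) decomposition: one for the tuple $\bfT$ acting on $\mcH$, and one for an associated linear map acting on the space $B(\mcH)$ of operators on $\mcH$. To begin, I would recast the defining identity in pencil form. Let $\beta\colon B(\mcH)\to B(\mcH)$ denote the linear map $\beta(X)=\sum_{i=1}^{d}T_i^{*}XT_i$. Using that $T_1,\dots,T_d$ commute, a short induction via the multinomial formula shows $\beta^{k}(X)=\sum_{|\alpha|=k}\tfrac{k!}{\alpha!}\,(T^{\alpha})^{*}XT^{\alpha}$ for all $k\ge 0$, so that $\bfT$ is $(A,m)$-isometric if and only if $(\beta-I)^{m}(A)=0$; equivalently, $A$ belongs to the generalized eigenspace of $\beta$ for the eigenvalue $1$.

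Next I would write down the candidate decomposition. Since the $T_i$ commute and $\dim\mcH<\infty$, the single-variable Jordan--Chevalley decomposition gives $T_i=S_i+N_i$ with $S_i$ a diagonalizable polynomial in $T_i$, $N_i$ nilpotent, and $S_iN_i=N_iS_i$. As each $S_i$ and $N_i$ is a polynomial in $T_i$, the tuples $\bfS=[S_1,\dots,S_d]$, $\bfN=[N_1,\dots,N_d]$ and $\bfT$ commute entrywise with one another. The $S_i$ are commuting diagonalizable operators, hence simultaneously diagonalizable, while the $N_i$ are commuting nilpotent operators, hence simultaneously strictly triangularizable; in particular $\bfN$ is a nilpotent tuple. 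So $\bfT=\bfS+\bfN$ with $\bfN$ a nilpotent tuple commuting with $\bfS$, and the only thing left to verify is that $\bfS$ is spherical $A$-isometric, that is, $\sum_i S_i^{*}AS_i=A$, equivalently $\beta_{\bfS}(A)=A$, where $\beta_{\bfS}(X)=\sum_i S_i^{*}XS_i$.

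For that final point I would compare $\beta$ and $\beta_{\bfS}$ as operators on $B(\mcH)$. Writing $L_C$ and $R_C$ for left and right multiplication by an operator $C$, we have $\beta=\sum_i L_{T_i^{*}}R_{T_i}$ and $\beta_{\bfS}=\sum_i L_{S_i^{*}}R_{S_i}$; since $\{S_i^{*}\}$ is simultaneously diagonalizable, $\beta_{\bfS}$ is a sum of commuting diagonalizable operators and hence is diagonalizable. Substituting $T_i=S_i+N_i$ yields $\beta=\beta_{\bfS}+R$ with $R=\sum_i\big(L_{S_i^{*}}R_{N_i}+L_{N_i^{*}}R_{S_i}+L_{N_i^{*}}R_{N_i}\big)$. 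The operators $L_{S_i^{*}},L_{N_i^{*}},R_{S_j},R_{N_j}$ pairwise commute (they are manufactured from the single commuting family $\{S_i,N_i\}$), so $\beta_{\bfS}$ and $R$ commute; and because $\bfN$, and therefore $\bfN^{*}$, is a nilpotent tuple, every sufficiently long product of operators drawn from $\{L_{N_i^{*}},R_{N_j}\}$ vanishes, so $R$---each of whose terms carries such a factor---is nilpotent. Thus $\beta=\beta_{\bfS}+R$ is the Jordan--Chevalley decomposition of $\beta$, so the generalized eigenspaces of $\beta$ coincide with the eigenspaces of its semisimple part $\beta_{\bfS}$; in particular $\ker(\beta-I)^{m}\subseteq\ker(\beta_{\bfS}-I)$. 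Since $A\in\ker(\beta-I)^{m}$, this gives $\beta_{\bfS}(A)=A$, as required.

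The step I expect to be the main obstacle---and the only place that really uses the tuple rather than facts about individual operators---is verifying that the error term $R$ is nilpotent on all of $B(\mcH)$: this needs $\bfN$ to be a \emph{nilpotent tuple}, so that arbitrarily interleaved products of the $L_{N_i^{*}}$ and $R_{N_j}$ eventually vanish, and it is not enough that each $N_i$ be nilpotent on its own. Once this is in hand, everything reduces to bookkeeping around the Jordan--Chevalley decomposition. Finally, the same scheme should deliver the ``hereditary root'' generalization mentioned in the abstract: whatever hereditary polynomial identity $\bfT$ satisfies becomes, after substituting the semisimple parts $S_i$ for the $T_i$, the assertion that the relevant coefficient operator lies in the generalized null space of a linear map on $B(\mcH)$ whose semisimple part is obtained by that same substitution, so $\bfS$ inherits the identity and $\bfN=\bfT-\bfS$ is the desired nilpotent tuple commuting with $\bfS$.
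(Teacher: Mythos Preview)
Your argument for Theorem~\ref{T:intro} is correct, and it reaches the conclusion by a genuinely different route than the paper. Both you and the paper take the same candidate decomposition $\bfT=\bfS+\bfN$ coming from the simultaneous Jordan--Chevalley splitting of the commuting tuple $\bfT$, but the verification that $\bfS$ is spherical $A$-isometric differs. The paper builds a hereditary functional calculus: it shows that $\calJ(A;\bfT,\bfT)=\{f:f(A;\bfT)=0\}$ is an ideal of $\C[\bfx,\bfy]$, proves that for generalized eigenvectors $u,v$ with eigenvalues $\lambda,\omega$ one has $f(\bar\lambda,\omega)\inner{Av}{u}=0$ whenever $f$ lies in the radical of this ideal, and then sums over the spectral projections to deduce ${\rm Rad}(\calJ(A;\bfT,\bfT))\subseteq\calJ(A;\bfS,\bfS)$. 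You instead lift the problem to $B(\mcH)$: the defining identity is $(\beta-I)^m(A)=0$ for the elementary operator $\beta(X)=\sum_i T_i^{*}XT_i$, and you observe that $\beta=\beta_{\bfS}+R$ is itself the Jordan--Chevalley decomposition of $\beta$ (with $R$ nilpotent because $\bfN$ is a nilpotent \emph{tuple}), so the generalized $1$-eigenspace of $\beta$ is exactly $\ker(\beta_{\bfS}-I)$. Your approach is shorter and more self-contained for this particular statement; the paper's approach buys generality, since the radical-ideal inclusion immediately handles any hereditary polynomial identity (isosymmetries, toral $(A,m)$-isometries, etc.). Your closing remark that ``the same scheme should deliver the hereditary root generalization'' is optimistic as stated: the $\beta$-trick works precisely because the $(A,m)$-isometric identity has the form $(\beta-I)^m(A)=0$ for a \emph{single} linear map $\beta$, and for a general $p(\bfx,\bfy)$ there is no obvious such map, so extending your method there would require new ideas rather than bookkeeping.
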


In the case of a single operator, Theorem \ref{T:intro} answers Question 1 in the affirmative. In the case $A=I$, we also obtain an affirmative answer to Question 2.

\section{Hereditary calculus and applications}

Our approach uses a generalization of the hereditary functional calculus developed by Agler \cite{Agler1980,AglerJOT1982}. We begin with some definitions and notation. We use boldface lowercase letters, for example $\bfx$, $\bfy$, to denote $d$-tuples of complex variables. Let $\C[\bfx,\bfy]$ denote the space of polynomials in commuting variables $\bfx$ and $\bfy$ with complex coefficients.
Let $A$ be a bounded linear operator on a Hilbert space $\mcH$ and $\bfX$, $\bfY$ be two $d$-tuples of commuting bounded operators on $\mcH$. These two tuples may not commute with each other. We denote by $\bfX^{*}$ the tuple $[X_1^{*},\ldots, X_d^{*}]$. Let $f\in\C[\bfx,\bfy]$. If
\[f(\bfx,\bfy) = \sum_{\alpha,\beta}c_{\alpha,\beta}\bfx^{\alpha}\bfy^{\beta},\] where the sum is finite, then we define
\begin{align}
\label{Eqn:A_hereditaryDef}
f(A;\bfX,\bfY) = \sum_{\alpha,\beta}c_{\alpha,\beta}(\bfX^{\alpha})^{*}A\bfY^{\beta}.
\end{align}
It is clear that the map $f\mapsto f(A;\bfX,\bfY)$ is linear from $\C[\bfx,\bfy]$ into $(\mathcal{B}(\mcH))^{d}$. If $g\in\C[\bfx,\bfy]$ depending only on $\bfx$, then $g(A;\bfX,\bfY) = g(\bfX^{*})A$. On the other hand, if $h\in\C[\bfx,\bfy]$ depending only on $\bfy$, then $h(A;\bfX,\bfY)=A\,h(\bfY)$. Furthermore, if $F=g\, f\, h$, then
\begin{align}
\label{Eqn:multiplicative_prop}
F(A;\bfX,\bfY) = g(X^{*})f(A;\bfX,\bfY)h(\bfY).
\end{align}
If $\bfX=\bfY$, we shall write $f(A;\bfX)$ instead of $f(A;\bfX,\bfX)$. In the case $A=I$, the identity operator, we shall use $f(\bfX,\bfY)$ to denote $f(I;\bfX,\bfY)$. Therefore, $f(\bfX)$ denotes $f(I;\bfX,\bfX)$. We say that $\bfX$ is a \textit{hereditary root} of $f$ if $f(\bfX)=0$.

\begin{example}
\label{E:m-isometries}
Define $p_m(\bfx,\bfy) = \big(\sum_{j=1}^{d} x_jy_j-1)^{m}\in\C[\bfx,\bfy]$. It is then clear that $\bfT$ is $m$-isometric if and only if $\bfT$ is a hereditary root of $p_m$, that is, $p_m(\bfT)=0$. Similarly, $\bfT$ is $(A,m)$-isometric if and only if $p_m(A;\bfT)=0$.
\end{example}

Even though the map $f\mapsto f(A;\bfX,\bfY)$ is not multiplicative in general, it turns out that its kernel is an ideal of $\C[\bfx,\bfy]$. This observation will play an important role in our approach.

\begin{proposition}
\label{P:A_ideal_property}
Let $A$ be a bounded linear operator and let $\bfX$ and $\bfY$ be two $d$-tuples of commuting operators. Define
$$\mathcal{J}(A;\bfX,\bfY) = \big\{f\in\C[\bfx,\bfy]: f(A;\bfX,\bfY) = 0\big\}.$$
Then $\mathcal{J}(A;\bfX,\bfY)$ is an ideal of $\C[\bfx,\bfy]$.
\end{proposition}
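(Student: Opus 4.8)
The plan is to verify the two defining properties of an ideal: that $\calJ(A;\bfX,\bfY)$ is closed under addition, and that it absorbs multiplication by arbitrary elements of $\C[\bfx,\bfy]$. The first property is immediate from the linearity of the map $f\mapsto f(A;\bfX,\bfY)$, which is already noted in the excerpt. For the absorption property, the key observation is the factorization identity \eqref{Eqn:multiplicative_prop}: if $F = g\cdot f\cdot h$ with $g$ depending only on $\bfx$ and $h$ depending only on $\bfy$, then $F(A;\bfX,\bfY) = g(\bfX^{*})\,f(A;\bfX,\bfY)\,h(\bfY)$.

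The main step is to reduce the general case to this factored case. Given $f\in\calJ(A;\bfX,\bfY)$ and an arbitrary $q\in\C[\bfx,\bfy]$, I want to show $qf\in\calJ(A;\bfX,\bfY)$. By linearity it suffices to treat $q$ a single monomial $\bfx^{\gamma}\bfy^{\delta}$. Writing $g(\bfx) = \bfx^{\gamma}$ and $h(\bfy) = \bfy^{\delta}$, we have $qf = g\cdot f\cdot h$ (the three commute as polynomials), so \eqref{Eqn:multiplicative_prop} gives
\[
(qf)(A;\bfX,\bfY) = g(\bfX^{*})\,f(A;\bfX,\bfY)\,h(\bfY) = (\bfX^{\gamma})^{*}\cdot 0\cdot \bfY^{\delta} = 0.
\]
Hence $qf\in\calJ(A;\bfX,\bfY)$, and extending linearly over all monomials in $q$ shows $qf\in\calJ(A;\bfX,\bfY)$ for every $q\in\C[\bfx,\bfy]$. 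Combined with closure under addition, this proves $\calJ(A;\bfX,\bfY)$ is an ideal.

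The only point requiring care — and the closest thing to an obstacle — is making sure \eqref{Eqn:multiplicative_prop} is genuinely applicable: it was stated for products $F = g\,f\,h$ where $g$ involves only $\bfx$-variables and $h$ only $\bfy$-variables, and one should confirm this is exactly the form $\bfx^{\gamma}\cdot f\cdot\bfy^{\delta}$ after splitting an arbitrary monomial $\bfx^{\gamma}\bfy^{\delta}$. Since $\C[\bfx,\bfy]$ is commutative, any monomial factors this way, so there is no real difficulty; the identity \eqref{Eqn:multiplicative_prop} does all the work, and the fact that $\bfX$ and $\bfY$ need not commute with each other is irrelevant here because $f(A;\bfX,\bfY)$ already vanishes. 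I do not expect to need any positivity of $A$ or any commutativity between $\bfX$ and $\bfY$.
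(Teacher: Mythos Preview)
Your proof is correct and follows essentially the same approach as the paper: both reduce to the case of a monomial multiplier $\bfx^{\gamma}\bfy^{\delta}$ by linearity and then invoke the factorization identity \eqref{Eqn:multiplicative_prop} to conclude $(qf)(A;\bfX,\bfY)=(\bfX^{\gamma})^{*}\,f(A;\bfX,\bfY)\,\bfY^{\delta}=0$. Your additional remarks about commutativity of $\C[\bfx,\bfy]$ and the irrelevance of commutativity between $\bfX$ and $\bfY$ are accurate but not needed beyond what the paper already uses.
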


\begin{proof}
For simplicity of the notation, throughout the proof, let us write $\mathcal{J}$ for $\mathcal{J}(A;\bfX,\bfY)$. It is clear that $\mathcal{J}$ is a vector subspace of $\C[\bfx,\bfy]$. Now let $f$ be in $\mathcal{J}$ and $g$ be in $\C[\bfx,\bfy]$. We need to show that $gf$ belongs to $\mathcal{J}$. By linearity, it suffices to consider the case $g$ is a monomial $g(\bfx,\bfy) = \bfx^{\alpha}\bfy^{\beta}$ for some multi-indices $\alpha$ and $\beta$. By \ref{Eqn:multiplicative_prop},
\begin{align*}
(fg)(A;\bfX,\bfY) & = (\bfX^{\alpha})^{*}\cdot f(A;\bfX,\bfY)\cdot \bfY^{\beta} = 0,
\end{align*}
since $f(A;\bfX,\bfY)=0$. This shows that $fg$ belongs to $\mathcal{J}$ as desired.
\end{proof}

If $f$ is a polynomial of $\bfy$ in the form $f(\bfy)=\sum_{\alpha}c_{\alpha}\bfy^{\alpha}$, we define $\bar{f}(\bfx)$ as
\[\bar{f}(\bfx) = \sum_{\alpha}\bar{c}_{\alpha}\bfx^{\alpha}.\] In the case $A$ is positive and $\bfX=\bfY$, we obtain an additional property of the ideal $\calJ(A;\bfY,\bfY)$ as follow.

\begin{proposition}
\label{P:A_positive_hereditary}
Let $A$ be a positive operator and $\bfY$ be a $d$-tuple of commuting operators. Suppose $f_1,\ldots, f_m$ are polynomials of $\bfy$ such that the sum $\bar{f}_1(\bfx)f_1(\bfy)+\cdots+\bar{f}_m(\bfx)f_m(\bfy)$ belongs to $\calJ(A;\bfY,\bfY)$. Then $f_1(\bfy),\ldots,f_m(\bfy)$ also belong to $\calJ(A;\bfY,\bfY)$.
\end{proposition}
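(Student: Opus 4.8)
The plan is to translate the membership hypothesis into a single operator identity and then exploit the positivity of $A$ via its square root.

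First I would unwind the hereditary notation. Fix $j$ and write $f_j(\bfy)=\sum_\alpha c_\alpha \bfy^\alpha$, so that $\bar f_j(\bfx)=\sum_\alpha \bar c_\alpha \bfx^\alpha$ and $\bar f_j(\bfx)f_j(\bfy)=\sum_{\alpha,\beta}\bar c_\alpha c_\beta\,\bfx^\alpha\bfy^\beta$. Applying the definition \eqref{Eqn:A_hereditaryDef} (equivalently, \eqref{Eqn:multiplicative_prop} with the middle factor equal to the constant polynomial $1$) gives
\[
\big(\bar f_j(\bfx)f_j(\bfy)\big)(A;\bfY,\bfY)=\sum_{\alpha,\beta}\bar c_\alpha c_\beta\,(\bfY^\alpha)^{*}A\,\bfY^\beta=\big(f_j(\bfY)\big)^{*}A\,f_j(\bfY),
\]
where $f_j(\bfY)=\sum_\alpha c_\alpha \bfY^\alpha$. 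Summing over $j$, the hypothesis $\bar f_1(\bfx)f_1(\bfy)+\cdots+\bar f_m(\bfx)f_m(\bfy)\in\calJ(A;\bfY,\bfY)$ is exactly the operator identity $\sum_{j=1}^{m}\big(f_j(\bfY)\big)^{*}A\,f_j(\bfY)=0$.

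Next, since $A\ge 0$ it has a positive square root $A^{1/2}$, and the identity rewrites as $\sum_{j=1}^{m}\big(A^{1/2}f_j(\bfY)\big)^{*}\big(A^{1/2}f_j(\bfY)\big)=0$. Each summand is a positive operator, so for every vector $v$ we obtain $\sum_{j=1}^{m}\big\|A^{1/2}f_j(\bfY)v\big\|^{2}=0$, which forces $A^{1/2}f_j(\bfY)v=0$ for all $v$ and all $j$. Hence $A^{1/2}f_j(\bfY)=0$, and therefore $A\,f_j(\bfY)=A^{1/2}\big(A^{1/2}f_j(\bfY)\big)=0$ for each $j$. Since $f_j$ depends only on $\bfy$, this says $f_j(A;\bfY,\bfY)=A\,f_j(\bfY)=0$, i.e.\ $f_j(\bfy)\in\calJ(A;\bfY,\bfY)$, which is the desired conclusion.

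There is no deep obstacle here and the proof is short. The one point requiring a little care is the bookkeeping that identifies $\big(\bar f_j(\bfx)f_j(\bfy)\big)(A;\bfY,\bfY)$ with $\big(f_j(\bfY)\big)^{*}A\,f_j(\bfY)$, and the step where positivity of $A$ is genuinely used is the passage from vanishing of a sum of positive operators to vanishing of each term; this fails for general self-adjoint $A$, which is precisely why the hypothesis $A\ge 0$ is needed.
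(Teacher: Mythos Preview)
Your proof is correct and follows essentially the same approach as the paper: translate the hypothesis into $\sum_j (f_j(\bfY))^{*}A\,f_j(\bfY)=0$, factor through $A^{1/2}$, and use positivity to conclude each $A^{1/2}f_j(\bfY)=0$. The only difference is that you spell out the bookkeeping and the vector-level argument in slightly more detail than the paper does.
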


\begin{proof}
Note that $\bar{f}_j(\bfY^{*}) = (f_j(\bfY))^{*}$ for all $j$. By the hypotheses, we have
\[(f_1(\bfY))^{*}Af_1(\bfY) + \cdots + (f_m(\bfY))^{*}Af_m(\bfY)=0,\]
which implies
\[[(A^{1/2}f_1(\bfY)]^{*}[A^{1/2}f_1(\bfY)] + \cdots + [A^{1/2}f_m(\bfY)]^{*}[A^{1/2}f_m(\bfY)]=0.\]
It follows that for all $j$, we have $A^{1/2}f_j(\bfY)=0$, which implies $Af_j(\bfY)=0$. Therefore, $f_j(\bfy)\in\calJ(A;\bfY,\bfY)$ for all $j$.
\end{proof}

Recall that the radical ideal of an ideal $\mathcal{J}\subset\C[\bfx,\bfy]$, denoted by ${\rm Rad}(\mathcal{J})$, is the set of all polynomials $p\in\C[\bfx,\bfy]$ such that $p^{N}\in\mathcal{J}$ for some positive integer $N$. In the following proposition, we provide an interesting relation between generalized eigenvectors and eigenvalues of $\bfX$ and $\bfY$ whenever we have $f(A;\bfX,\bfY)=0$.

\begin{proposition}
\label{P:generalized_eigenvectors}
Let $\bfX$ and $\bfY$ be two $d$-tuples of commuting operators. Suppose $k$ is a positive integer, $\lambda=(\lambda_1,\ldots,\lambda_d), \omega=(\omega_1,\ldots,\omega_d)\in\C^d$ and $u,v\in\calH$ such that 
\[(X_j - {\lambda}_j)^{k}u=(Y_j-\omega_j)^{k}v=0\] for all $1\leq j\leq d$. Then for any polynomial $f\in{\rm Rad}(\calJ(A;\bfX,\bfY))$, we have
\begin{align}
\label{Eqn:vanishing_ideal}
f(\bar{\lambda},\omega)\inner{Av}{u}=0.
\end{align}
\end{proposition}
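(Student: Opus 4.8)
The plan is to reduce the statement about the radical to the corresponding statement for the ideal $\calJ(A;\bfX,\bfY)$ itself, and then to prove that by induction on a degree of nilpotency of the generalized eigenvectors $u$ and $v$.

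\textbf{Reduction to the ideal.} I would first note that it suffices to prove: for every $g\in\calJ(A;\bfX,\bfY)$ and every pair $u,v$ satisfying the hypotheses of the proposition, $g(\bar\lambda,\omega)\inner{Av}{u}=0$. Granting this, if $f\in{\rm Rad}(\calJ(A;\bfX,\bfY))$ then $f^{N}\in\calJ(A;\bfX,\bfY)$ for some $N\geq1$, and since evaluation at the point $(\bar\lambda,\omega)$ is a ring homomorphism $\C[\bfx,\bfy]\to\C$, this gives $f(\bar\lambda,\omega)^{N}\inner{Av}{u}=0$. Setting $c=f(\bar\lambda,\omega)$ and $w=\inner{Av}{u}$, from $c^{N}w=0$ one concludes $cw=0$: if $w=0$ this is clear, and if $w\neq0$ then $c^{N}=0$ forces $c=0$. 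This elementary passage from a power to the first power will be used again below.

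\textbf{The key expansion.} Fix $g\in\calJ(A;\bfX,\bfY)$, say $g(\bfx,\bfy)=\sum_{\alpha,\beta}c_{\alpha,\beta}\bfx^{\alpha}\bfy^{\beta}$. Writing $(\bfX-\lambda)^{s}:=\prod_{j}(X_j-\lambda_j)^{s_j}$, $\binom{\alpha}{s}:=\prod_j\binom{\alpha_j}{s_j}$ and using the usual multi-index conventions, commutativity of the $X_j$ gives the finite expansion
\[\bfX^{\alpha}=\prod_{j=1}^{d}\big(\lambda_j+(X_j-\lambda_j)\big)^{\alpha_j}=\sum_{0\leq s\leq\alpha}\binom{\alpha}{s}\lambda^{\alpha-s}(\bfX-\lambda)^{s},\]
and similarly for $\bfY^{\beta}$. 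Put $u_s:=(\bfX-\lambda)^{s}u$ and $v_t:=(\bfY-\omega)^{t}v$; note $u_s=0$ whenever some $s_j\geq k$, and likewise for $v_t$. Since $g(A;\bfX,\bfY)$ is the zero operator, pairing $g(A;\bfX,\bfY)v$ with $u$ and substituting the expansions for $\bfX^{\alpha}u$ and $\bfY^{\beta}v$ produces an identity
\[0=\inner{g(A;\bfX,\bfY)v}{u}=\sum_{s,t}d_{s,t}\inner{Av_t}{u_s},\]
where each $d_{s,t}$ is a scalar depending only on $g,\lambda,\omega$ (a Taylor coefficient of $g$ at $(\bar\lambda,\omega)$), and in particular $d_{0,0}=g(\bar\lambda,\omega)$. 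Isolating the term $(s,t)=(0,0)$ rewrites this as
\[g(\bar\lambda,\omega)\inner{Av}{u}=-\sum_{(s,t)\neq(0,0)}d_{s,t}\inner{Av_t}{u_s}.\]

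\textbf{The induction and the main difficulty.} For a generalized eigenvector $w$ of $\bfX$ at $\lambda$, let $\ell(w)$ be the least integer $\geq0$ with $(\bfX-\lambda)^{s}w=0$ for every $|s|=\ell(w)$; this is finite because $(X_j-\lambda_j)^{k}w=0$, and $\ell(w)=0$ exactly when $w=0$. One checks $(\bfX-\lambda)^{s}w=0$ for all $|s|\geq\ell(w)$, so that $\ell(u_s)\leq\ell(u)-1$ once $|s|\geq1$ and $u\neq0$; define $\ell$ analogously for $\bfY$ at $\omega$. I would then prove the claim of the first step by induction on $\ell(u)+\ell(v)$. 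If $u=0$ or $v=0$ it is trivial. Otherwise every term on the right of the last display has $(s,t)\neq(0,0)$, hence $\ell(u_s)+\ell(v_t)<\ell(u)+\ell(v)$, and since $u_s,v_t$ are again generalized eigenvectors (at $\lambda$, resp.\ $\omega$) the inductive hypothesis gives $g(\bar\lambda,\omega)\inner{Av_t}{u_s}=0$ for each such term. Multiplying the last display by $g(\bar\lambda,\omega)$ then leaves $g(\bar\lambda,\omega)^{2}\inner{Av}{u}=0$, and the scalar argument from the reduction step upgrades this to $g(\bar\lambda,\omega)\inner{Av}{u}=0$. The step that requires care — and the reason an induction is needed at all — is precisely that invoking $g(A;\bfX,\bfY)=0$ never isolates a single error inner product $\inner{Av_t}{u_s}$; it only ever delivers $g(\bar\lambda,\omega)$ times such a quantity. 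One must therefore arrange (via the right nilpotency parameter $\ell(\cdot)$) that each error term carries the extra factor $g(\bar\lambda,\omega)$, deduce $g(\bar\lambda,\omega)^{2}\inner{Av}{u}=0$, and finish with the trivial observation that $c^{2}w=0$ implies $cw=0$ in $\C$.
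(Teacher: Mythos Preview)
Your proof is correct, but it takes a different route from the paper's argument.

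The paper works entirely at the polynomial level. Starting from the one-step Taylor identity
\[
f(\bar\lambda,\omega)-f(\bfx,\bfy)=\sum_{j}(x_j-\bar\lambda_j)g_j(\bfx,\bfy)+\sum_{j}h_j(\bfx,\bfy)(y_j-\omega_j),
\]
it raises this equation to a power $M\geq 1+2d(k-1)$, so that by pigeonhole every term of the multinomial expansion carries a factor $(x_j-\bar\lambda_j)^{k}$ or $(y_j-\omega_j)^{k}$. Applying the hereditary calculus and the ideal property of $\calJ(A;\bfX,\bfY)$ once then gives $\big(f(\bar\lambda,\omega)\big)^{M}\inner{Av}{u}=0$ directly, with no induction.

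You instead work at the vector level: you Taylor-expand $\bfX^{\alpha}u$ and $\bfY^{\beta}v$ around $\lambda$ and $\omega$, introduce a nilpotency index $\ell(\cdot)$, and induct on $\ell(u)+\ell(v)$ to show that every off-diagonal term $\inner{Av_t}{u_s}$ is annihilated by $g(\bar\lambda,\omega)$, arriving at $g(\bar\lambda,\omega)^{2}\inner{Av}{u}=0$. Both approaches end with the scalar implication $c^{N}w=0\Rightarrow cw=0$; the paper's high-power trick kills all error terms at once and avoids setting up an induction, while your argument trades that trick for a more hands-on reduction that makes explicit how each Taylor coefficient $d_{s,t}$ feeds back into the estimate. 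Either is a clean proof of the proposition.
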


\begin{proof}
We first assume that $f\in\calJ(A;\bfX,\bfY)$. Using Taylor's expansion, we find polynomials $g_1,\ldots, g_d$ and $h_1,\ldots,h_d$ such that
\[f(\bar{\lambda},{\omega})-f(\bfx,\bfy) = \sum_{j=1}^{d}(x_j-\bar{\lambda}_j)g_j(\bfx,\bfy) + \sum_{j=1}^{d}h_j(\bfx,\bfy)(y_j-{\omega}_j).\]
Take any integer $M\geq 1+2d(k-1)$. By the multinomial expansion, there exist polynomials $G_1,\ldots, G_d$ and $H_1,\ldots,H_d$ such that
\begin{align*}
\Big(f(\bar{\lambda},{\omega})-f(\bfx,\bfy))\Big)^{M} = \sum_{j=1}^{d}(x_j-\bar{\lambda}_j)^{k}G_j(\bfx,\bfy) + \sum_{j=1}^{d}H_j(\bfx,\bfy)(y_j-{\omega}_j)^{k}.
\end{align*}
The left-hand side, by the binomial expansion, can be written as
\[ (f(\bar{\lambda},{\omega}))^{M}+f(\bfx,\bfy)H(\bfx,\bfy)\] for some polynomial $H$. Since $f(A;\bfX,\bfY)=0$, using Equation \eqref{Eqn:multiplicative_prop} and Proposition \ref{P:A_ideal_property}, we conclude that
\[(f(\bar{\lambda},{\omega}))^M\cdot A= \sum_{j=1}^{d}(X_j^{*} - \bar{\lambda}_j)^{k}G_j(A;\bfX,\bfY) + \sum_{j=1}^{d}H_j(A;\bfX,\bfY)(Y_j-{\omega}_j)^{k}.\]
Consequently,
\begin{align*}
& (f(\bar{\lambda},{\omega}))^M\inner{Av}{u} \\
& \ = \sum_{j=1}^{d}\binner{G_j(A;\bfX,\bfY)v}{(X_j-{\lambda}_j)^{k}u} + \sum_{j=1}^{d}\binner{H_j(A;\bfX,\bfY)(Y_j-\omega_j)^{k}v}{u}\\
& \ = 0,
\end{align*}
which implies \eqref{Eqn:vanishing_ideal}. 

In the general case, there exists an integer $N\geq 1$ such that $f^N$ belongs to $\calJ(A;\bfX,\bfY)$. By the case we have just proved, $(f(\bar{\lambda},\omega))^{N}\inner{Av}{u}=0$, which again implies \eqref{Eqn:vanishing_ideal}. This completes the proof of the proposition.
\end{proof}

\begin{remark} In the case of a single operator, Proposition \ref{P:generalized_eigenvectors} provides a generalization of \cite[Lemmas 18 and 19]{AglerLAA1998}. Our proof here is even simpler and more transparent.
\end{remark}

Question 1 and Question 2 in the introduction concern operators acting on a finite dimensional Hilbert space. It turns out that this condition can be replaced by a weaker one. Recall that a linear operator $T$ is called \textit{algebraic} if there exists complex constants $c_0, c_1,\ldots, c_{\ell}$ such that
\[c_0 I + c_1 T + \cdots + c_{\ell}T^{\ell} = 0.\]
Algebraic operator roots of polynomials were investigated in \cite{AglerLAA1998}.

We first discuss some preparatory results on algebraic operators acting on a  general complex vector space $\calV$. It is well known that if $T$ is an algebraic linear operator on $\calV$, then the spectrum $\sigma(T)$ is finite and there exists a direct sum decomposition $\calV = \oplus_{a\in\sigma(T)}\calV_{a}$, where each $\calV_{a}$ is an invariant subspace for $T$ (the subspace $\calV_{a}$ is a closed subspace if $\calV$ is a normed space and $T$ is bounded) and $T-a I$ is nilpotent on $\calV_{a}$. Indeed, if the minimal polynomial of $T$ is factored in the form
\[p(z) = (z-a_1)^{m_1}\cdots (z-a_{\ell})^{m_{\ell}},\] where $a_1,\ldots,a_{\ell}$ are pairwise distinct and $m_1,\ldots,m_{\ell}\geq 1$, then $\sigma(T)=\{a_1,\ldots,a_{\ell}\}$ and $\calV_{a_j}=\ker(T-a_j)^{m_j}$ for $1\leq j\leq \ell$. See, for example, \cite[Section 6.3]{ShilovDoverPub1977}, which discusses operators acting on finite dimensional vector spaces. However, the arguments apply to algebraic operators on infinite dimensional vector spaces as well.

Suppose now $\bfT=[T_1,\ldots, T_d]$ is a tuple of commuting algebraic operators on $\calV$. We first decompose $\calV$ as above with respect to the spectrum $\sigma(T_1)$. Since each subspace in the decomposition is invariant for all $T_j$, we again decompose such subspace with respect to the spectrum $\sigma(T_2)$. Continuing this process, we obtain a finite set $\Lambda\subset\C^d$ and a direct sum decomposition $\calV=\oplus_{\lambda\in\Lambda}\calV_{\lambda}$ such that for each $\lambda=(\lambda_1,\ldots,\lambda_d)\in\Lambda$ and $1\leq j\leq d$, the subspace $\calV_{\lambda}$ is invariant for $\bfT$ and $T_j-\lambda_j I$ is nilpotent on $\calV_{\lambda}$. Let $E_{\lambda}$ denote the canonical projection (possibly non-orthogonal) from $\calV$ onto $\calV_{\lambda}$. Then we have $\sum_{\lambda\in\Lambda}E_{\lambda}=I$, $E_{\lambda}^2=E_{\lambda}$, and $E_{\lambda}E_{\gamma}=0$ if $\lambda\neq\gamma$. Define
\begin{align}
\label{Eqn:Jordan_decomposition}
\bfS = \sum_{\lambda\in\Lambda}\lambda\cdot E_{\lambda} = \Big[\sum_{\lambda\in\Lambda}{\lambda}_1 E_{\lambda},\ldots,\sum_{\lambda\in\Lambda}{\lambda}_d E_{\lambda}\Big]
\end{align}
Then $\bfS$ is a tuple of commuting operators which commutes with $\bfT$, and $\bfT-\bfS$ is nilpotent. For any multiindex $\alpha$, we have
\[\bfS^{\alpha} = S_1^{\alpha_1}\cdots S_d^{\alpha_d} = \sum_{\lambda\in\Lambda}\lambda^{\alpha}E_{\lambda}.\]
In the case $\calV$ is a normed space and $\bfT$ is bounded, each operator in the tuple $\bfS$ is bounded as well. 

We now prove a very general result, which will provide affirmative answers to Questions 1 and 2 in the introduction.

\begin{theorem}
\label{T:decomposition_tuples}
Let $\bfX$ and $\bfY$ be two $d$-tuples of commuting algebraic operators on a Hilbert space $\calH$.  Let $\bfU$ (respectively, $\bfV$) be the commuting tuple associated with $\bfX$ (respectively, $\bfY$) as in \eqref{Eqn:Jordan_decomposition}. Then
\begin{align}
\label{Eqn:decomposition_tuples}
{\rm Rad}(\calJ(A;\bfX,\bfY)) \subseteq \calJ(A;\bfU,\bfV).
\end{align}
\end{theorem}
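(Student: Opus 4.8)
The plan is to reduce the operator identity $f(A;\bfX,\bfY)=0$ (valid for $f$ in $\calJ(A;\bfX,\bfY)$, and hence, after raising to a power, for $f\in{\rm Rad}(\calJ(A;\bfX,\bfY))$) to the corresponding identity with $\bfX,\bfY$ replaced by their diagonal parts $\bfU,\bfV$, by testing against the generalized eigenspaces that build up the direct-sum decompositions. Concretely, write $\calH=\oplus_{\lambda\in\Lambda_{\bfX}}\calH_{\lambda}$ for the decomposition attached to $\bfX$, with projections $E_{\lambda}$ and $\bfU=\sum_{\lambda}\lambda E_{\lambda}$, and similarly $\calH=\oplus_{\omega\in\Lambda_{\bfY}}\calK_{\omega}$ with projections $F_{\omega}$ and $\bfV=\sum_{\omega}\omega F_{\omega}$. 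Since $f(\bfU^{\alpha})^{*}=\sum_{\lambda}\bar\lambda^{\alpha}E_{\lambda}^{*}$ and $\bfV^{\beta}=\sum_{\omega}\omega^{\beta}F_{\omega}$, for $f(\bfx,\bfy)=\sum_{\alpha,\beta}c_{\alpha,\beta}\bfx^{\alpha}\bfy^{\beta}$ one computes
\[
f(A;\bfU,\bfV)=\sum_{\lambda,\omega}\Big(\sum_{\alpha,\beta}c_{\alpha,\beta}\bar\lambda^{\alpha}\omega^{\beta}\Big)E_{\lambda}^{*}AF_{\omega}=\sum_{\lambda,\omega}f(\bar\lambda,\omega)\,E_{\lambda}^{*}AF_{\omega}.
\]
So the theorem amounts to showing that $f(\bar\lambda,\omega)E_{\lambda}^{*}AF_{\omega}=0$ for every pair $(\lambda,\omega)$ whenever $f\in{\rm Rad}(\calJ(A;\bfX,\bfY))$.

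For a fixed pair $(\lambda,\omega)$, I would test the operator $f(\bar\lambda,\omega)E_{\lambda}^{*}AF_{\omega}$ against arbitrary vectors: for $w\in\calH$ and $w'\in\calH$, set $v=F_{\omega}w\in\calK_{\omega}$ and $u=E_{\lambda}w'\in\calH_{\lambda}$. Then $\inner{f(\bar\lambda,\omega)E_{\lambda}^{*}AF_{\omega}w}{w'}=f(\bar\lambda,\omega)\inner{Av}{u}$. Now $u$ lies in the generalized eigenspace $\calH_{\lambda}$, so for each $j$ there is $k_j$ with $(X_j-\lambda_j)^{k_j}u=0$, and likewise $(Y_j-\omega_j)^{k'_j}v=0$; taking $k$ to be the maximum of all these exponents gives $(X_j-\lambda_j)^{k}u=(Y_j-\omega_j)^{k}v=0$ for all $j$. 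This is exactly the hypothesis of Proposition \ref{P:generalized_eigenvectors}, which then yields $f(\bar\lambda,\omega)\inner{Av}{u}=0$. Since $w,w'$ were arbitrary, $f(\bar\lambda,\omega)E_{\lambda}^{*}AF_{\omega}=0$, and summing over all $(\lambda,\omega)$ gives $f(A;\bfU,\bfV)=0$, i.e. $f\in\calJ(A;\bfU,\bfV)$.

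The only genuinely delicate point is bookkeeping: the decomposition for $\bfX$ in \eqref{Eqn:Jordan_decomposition} is obtained by successively splitting along $\sigma(T_1),\dots,\sigma(T_d)$, so one must make sure each final summand $\calH_{\lambda}$ really is a generalized eigenspace simultaneously for all coordinates $X_1,\dots,X_d$, with a uniform nilpotency order available — but this is precisely what was recorded in the discussion preceding the theorem ($T_j-\lambda_j I$ is nilpotent on $\calV_{\lambda}$), so no new work is needed. One should also note that the projections $E_{\lambda}$ need not be orthogonal, so $E_{\lambda}^{*}$ is merely the Hilbert-space adjoint and $\inner{E_{\lambda}^{*}AF_{\omega}w}{w'}=\inner{AF_{\omega}w}{E_{\lambda}w'}$; this is the identity used above and it requires nothing beyond the definition of the adjoint. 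Everything else is a direct application of Proposition \ref{P:generalized_eigenvectors} together with the linearity of $f\mapsto f(A;\bfU,\bfV)$, so I expect the proof to be short.
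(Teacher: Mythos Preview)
Your proposal is correct and follows essentially the same route as the paper's proof: decompose $\calH$ via the generalized eigenspaces of $\bfX$ and $\bfY$, compute $f(A;\bfU,\bfV)=\sum_{\lambda,\omega}f(\bar\lambda,\omega)E_{\lambda}^{*}AF_{\omega}$, and kill each summand by applying Proposition~\ref{P:generalized_eigenvectors} to vectors $u\in\calH_{\lambda}$, $v\in\calK_{\omega}$. The only differences are notational and in the order of presentation; your explicit remarks on the adjoint of the non-orthogonal projections and on choosing a common nilpotency order $k$ are correct and match what the paper uses implicitly.
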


\begin{proof}
Write $\bfX=[X_1, \ldots, X_d]$ and decompose $\calH=\oplus_{\lambda\in\Lambda}\calH_{\lambda}$ such that for each $\lambda=(\lambda_1,\ldots,\lambda_d)\in\Lambda$, the subspace $\calH_{\lambda}$ is invariant for $\bfX$ and $X_j - {\lambda}_j I$ is nilpotent on $\calH_{\lambda}$. Let $U_{\lambda}$ denote the canonical projection from $\calH$ onto $\calH_{\lambda}$. Then
$\bfU = \sum_{\lambda\in\Lambda}{\lambda}\cdot U_{\lambda}$ and for any multiindex $\alpha$, we have
\[\bfU^{\alpha} = \sum_{\lambda\in\Lambda}{\lambda}^{\alpha}\cdot U_{\lambda}.\]
Similarly, write $\bfY=[Y_1,\ldots, Y_d]$ and decompose $\calH=\oplus_{\omega\in\Omega}\calK_{\omega}$. Let $V_{\omega}$ be the canonical projection from $\calH$ onto $\calK_{\omega}$. Then
$\bfV = \sum_{\omega\in\Omega}\omega\cdot V_{\omega}$ and for any multiindex $\beta$, 
\[\bfV^{\beta} = \sum_{\omega\in\Omega}\omega^{\beta}\cdot V_{\omega}.\]
Take any polynomial $p\in{\rm Rad}(\calJ(A;\bfX,\bfY))$. For $\lambda\in\Lambda$, $\omega\in\Omega$ and vectors $u\in\calH_{\lambda}$ and $v\in\calK_{\omega}$, there exists an integer $k\geq 1$ sufficiently large such that \[(X_j-\lambda_j I)^{k}u=(Y_j-\omega_j I)^{k}v=0\] for all $1\leq j\leq d$. Proposition \ref{P:generalized_eigenvectors} shows that $p(\bar{\lambda},\omega)\inner{Av}{u}=0$,
which implies \[p(\bar{\lambda},\omega)U_{\lambda}^{*}AV_{\omega} = 0.\]
Write $p(\bfx,\bfy) = \sum_{\alpha,\beta} c_{\alpha,\beta}\bfx^{\alpha}\bfy^{\beta}$. We compute
\begin{align*}
p(A;\bfU,\bfV) & = \sum_{\alpha,\beta}c_{\alpha,\beta}\bfU^{*\alpha}A\bfV^{\beta}\\
& = \sum_{\alpha,\beta}c_{\alpha,\beta}\Big(\sum_{\lambda\in\Lambda}\bar{\lambda}^{\alpha}U_{\lambda}^{*}\Big)A\Big(\sum_{\omega\in\Omega}\omega^{\beta}\cdot V_{\omega}\Big)\\
& = \sum_{\lambda\in\Lambda, \omega\in\Omega}\Big(\sum_{\alpha,\beta}c_{\alpha,\beta}\bar{\lambda}^{\alpha}\omega^{\beta}\Big)U_{\lambda}^{*}AV_{\omega}\\
& = \sum_{\lambda\in\Lambda, \omega\in\Omega}p(\bar{\lambda},\omega)U_{\lambda}^{*}AV_{\omega} = 0.
\end{align*}
We conclude that $p\in\calJ(A;\bfU,\bfV)$. Since $p\in{\rm Rad}(\calJ(A;\bfX,\bfY))$ was arbitrary, the proof of the theorem is complete.
\end{proof}

Theorem \ref{T:decomposition_tuples} enjoys numerous interesting applications that we now describe.

\begin{poThmOne} We shall prove the theorem under a more general assumption that $\bfT$ is a tuple of commuting algebraic operators. Since $\bfT$ is $(A,m)$-isometric, the polynomial $(\sum_{j=1}^{d}x_jy_j-1)^m$ belongs to the ideal $\calJ(A;\bfT,\bfT)$. It follows that the polynomial $p(x,y)=\sum_{j=1}^{d}x_jy_j -1$ belongs to the radical ideal of $\calJ(A;\bfT,\bfT)$. By Theorem \ref{T:decomposition_tuples}, we may decompose $\bfT=\bfS+\bfN$, where $\bfN$ is a nilpotent tuple commuting with $\bfS$ and $p(A;\bfS,\bfS)=0$, which means that $\bfS$ is a spherical $A$-isometry. 
\end{poThmOne}

\begin{example}
Recall that an operator $T$ is called $(m,n)$-isosymmetric (see \cite{StankusIEOT2013}) if $T$ is a hereditary root of $f(x,y)=(xy-1)^m\,(x-y)^n$. Theorem \ref{T:decomposition_tuples} shows that any such algebraic $T$ can be decomposed as $T=S+N$, where $N$ is nilpotent, $S$ is isosymmetric (i.e. $(1,1)$-isosymmetric) and $SN=NS$.
\end{example}

\begin{example} Several researchers \cite{MohammadiRMJM2019, AthavaleP1999} have investigated the so-called toral $m$-isometric tuples. It is straightforward to generalize this notion to toral $(A,m)$-isometric tuples, which are commuting $d$-tuples $\bfT$ that satisfy
\[\sum_{\substack{0\leq \alpha_1\leq m_1\\ \ldots\\ 0\leq \alpha_d\leq m_d}} (-1)^{|\alpha|}\binom{m_1,\ldots, m_d}{\alpha}(\bfT^{\alpha})^{*}\,A\,\bfT^{\alpha} = 0.\] for all $m_1+\cdots+m_d=m$. Equivalently, $\bfT$ is a common hereditary root of all polynomials of the form $(1-x_1y_1)^{m_1}\cdots(1-x_dy_d)^{m_d}$ for $m_1+\cdots+m_d=m$. This means that all these polynomials belong to the ideal $\calJ(A;\bfT,\bfT)$. We see that toral $(A,1)$-isometries are just commuting tuples $\bfT$ such that each $T_j$ is an $A$-isometry, that is, $T_j^{*}AT_j=A$. Note that for any toral $(A,m)$-isometry $\bfT$, the radical ideal ${\rm Rad}(\calJ(A;\bfT,\bfT))$ contains all the polynomials $\big\{1-x_jy_j: j=1,2,\ldots, d\big\}$. Theorem \ref{T:decomposition_tuples} asserts that $\bfT=\bfS+\bfN$, where $\bfS$ is a toral $(A,1)$-isometry and $\bfN$ is a nilpotent tuple commuting with $\bfS$. 
\end{example}

\section{On $2$-isometric tuples}
It is well known that any $2$-isometry on a finite dimensional Hilbert space must actually be an isometry. On the other hand, there are many examples of finite dimensional $2$-isometric tuples that are not spherical isometries. The following class of examples is given in Richter's talk \cite{RichterCornell2011}.

\begin{example} 
\label{Ex:two_iso_tuples}
If $\alpha=(\alpha_1,\ldots,\alpha_d)\in\partial\B_d$ and $V_j:\C^m\rightarrow\C^n$ such that $\sum_{j=1}^{d}\overline{\alpha}_jV_j=0$, then $\bfW=(W_1,\ldots, W_d)$ with
\[W_j = \begin{pmatrix}
\alpha_j I_{n} & V_j\\
0 & \alpha_j I_{m}
\end{pmatrix}
\] defines a $2$-isometric $d$-tuple.
\end{example} 

The following result was stated in \cite{RichterCornell2011} without a proof and as far as the author is aware of, it has not appeared in a published paper. 

\begin{theorem}[Richter-Sundberg]
\label{T:RS}
If $\bfT$ is a $2$-isometric tuple on a finite dimensional space, then
\[\bfT = \bfU\oplus \bfW,\]
where $\bfU$ is a spherical unitary and $\bfW$ is a direct sum of operator tuples unitarily equivalent to those in Example \ref{Ex:two_iso_tuples}.
\end{theorem}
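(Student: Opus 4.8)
The plan is to combine the decomposition $\bfT = \bfS+\bfN$ from Theorem~\ref{T:intro} (with $A=I$, $m=2$) with a careful analysis of the nilpotent part, using positivity to get an orthogonal direct sum rather than merely a similarity. First I would apply Theorem~\ref{T:intro} to write $\bfT = \bfS + \bfN$ on the finite dimensional space $\calH$, where $\bfS$ is a spherical isometry commuting with the nilpotent tuple $\bfN$. Since $\calH$ is finite dimensional, a spherical isometry is automatically a spherical unitary, so each $S_j$ is normal and the $S_j$ commute; hence they are simultaneously diagonalizable, giving an orthogonal decomposition $\calH = \oplus_{\lambda} \calH_\lambda$ into joint eigenspaces $\calH_\lambda = \ker(S_1-\lambda_1)\cap\cdots\cap\ker(S_d-\lambda_d)$, with $\sum_j |\lambda_j|^2 = 1$ for each $\lambda$ appearing. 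Because $\bfN$ commutes with $\bfS$, each $\calH_\lambda$ is reducing for $\bfN$, so on $\calH_\lambda$ we have $\bfT|_{\calH_\lambda} = \lambda\, I + \bfN|_{\calH_\lambda}$, and the whole tuple splits as an orthogonal direct sum over $\lambda$. It therefore suffices to treat a single block, i.e.\ to assume $\bfS = \lambda\, I$ is a fixed scalar spherical unitary and $\bfN$ is nilpotent.

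Next I would bring the $2$-isometry equation to bear on a single block. Writing $\bfT = \lambda I + \bfN$ and expanding $p_2(\bfT) = \big(\sum_j T_j^* T_j - I\big)^2 = 0$ in powers of $\bfN$ and $\bfN^*$, the lowest-order surviving terms (using $\sum_j|\lambda_j|^2=1$) should force $\big(\sum_j(\bar\lambda_j N_j + N_j^*\lambda_j)\big)^2 = 0$ plus higher-order corrections; the goal is to show that in fact $\bfN^2 = 0$ on each block, i.e.\ each $N_j$ has square zero and moreover $N_j N_k = 0$ for all $j,k$, and that the "symbol" $\sum_j \bar\lambda_j N_j$ vanishes. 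Concretely, I expect the equation to reduce, block by block, to the two conditions: (a) $\sum_j \bar\lambda_j N_j = 0$, and (b) $N_j N_k = 0$ for all $j,k$ (so in particular $\bfN$ is $2$-step nilpotent). Condition (b) means that $\calR := \sum_j \operatorname{ran} N_j \subseteq \bigcap_j \ker N_j =: \calK$. Splitting $\calH_\lambda$ as $\calK \ominus \calR$ plus $\calR$ plus a complement of $\calK$ and arranging bases, each $N_j$ becomes a $2\times 2$ block matrix that is strictly upper triangular with a single off-diagonal block $V_j$; then $\bfT|_{\calH_\lambda} = \lambda I + \bfN$ has exactly the form of Example~\ref{Ex:two_iso_tuples} with $\sum_j\bar\lambda_j V_j = 0$ coming from condition~(a), provided the two diagonal blocks have the same dimension. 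The spherical unitary summand $\bfU$ is then the part of $\calH$ on which $\bfN = 0$.

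I would handle the dimension-matching subtlety as follows. On a given block $\calH_\lambda$, after peeling off $\ker\bfN \cap (\operatorname{ran}\bfN)^\perp$ the remaining piece should have $\operatorname{ran}\bfN$ and a complementary space of equal dimension; this is where one uses that the $2$-isometry condition is an equality (not an inequality) together with the self-adjointness built into the hereditary equation — effectively the "defect" operator $\sum_j T_j^*T_j - I$ restricted to the nontrivial part is unitarily conjugate to something with matching kernel and cokernel dimensions. If the dimensions did not match one could still write $\bfW$ as a direct sum of an Example~\ref{Ex:two_iso_tuples}-type tuple and a smaller spherical unitary, which is absorbed into $\bfU$. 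Finally I would assemble the global statement: $\bfT = \bfU \oplus \bigoplus_\ell \bfW_\ell$ where each $\bfW_\ell$ is the block associated to some $\lambda$ with nonzero nilpotent part, unitarily equivalent to an Example~\ref{Ex:two_iso_tuples} tuple.

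The main obstacle will be Step~2: extracting from the single scalar equation $p_2(\lambda I + \bfN)=0$ the sharp conclusions $N_jN_k=0$ and $\sum_j\bar\lambda_j N_j = 0$, rather than just nilpotency of some combination. The right tool is probably an induction on the nilpotency order of $\bfN$: look at the top of a Jordan-type filtration, use the commutativity $N_jN_k = N_kN_j$ and the positivity of the cross terms in the expanded equation (grouping $\bfN^*\!$-and-$\bfN$ terms into a sum of positive operators whose total is zero, as in the proof of Proposition~\ref{P:A_positive_hereditary}) to kill the highest-order piece, and iterate. Making this grouping clean — identifying exactly which terms in $\big(\sum_j(\lambda I+N)_j^*(\lambda I + N)_j - I\big)^2$ are manifestly positive — is the delicate computational heart of the argument.
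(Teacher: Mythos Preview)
Your outline is essentially the paper's proof, with two differences worth flagging. First, to obtain an \emph{orthogonal} eigenspace decomposition you invoke the external fact that a commuting spherical isometry on a finite dimensional space is a spherical unitary (hence each $S_j$ is normal); this is true (via Athavale's subnormality theorem plus the fact that subnormal $=$ normal in finite dimensions) but is not proved in the paper. The paper instead gets orthogonality directly from Proposition~\ref{P:generalized_eigenvectors}: applying it with $A=I$ and the polynomial $(\sum_j x_jy_j-1)^2$ yields $(\inner{\omega}{\lambda}-1)^2\inner{v}{u}=0$ for $v\in\mcH_\omega$, $u\in\mcH_\lambda$, whence $|\lambda|=1$ for all $\lambda\in\Lambda$ and $\mcH_\lambda\perp\mcH_\omega$ for $\lambda\neq\omega$. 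This stays entirely within the hereditary-calculus framework and avoids the appeal to Athavale. Second, your key Step~2 is exactly what the paper does in the first part of the proof of Theorem~\ref{T:A_2_isometric_tuples}: the induction on the nilpotency order $r$ is carried out by working in the ideal $\calJ=\calJ(I;\bfN,\bfN)$, multiplying $p(\bfx,\bfy)=\big(\sum_j x_jy_j+\lambda_jx_j+\bar\lambda_jy_j\big)^2$ by suitable monomials $\bfx^\beta\bfy^\beta$, and repeatedly applying Proposition~\ref{P:A_positive_hereditary} to peel off factors. This ideal-theoretic packaging is precisely the ``clean grouping'' you are looking for; you should use it rather than expanding $p_2(\lambda I+\bfN)$ by brute force.

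Finally, your dimension-matching worry is a red herring: Example~\ref{Ex:two_iso_tuples} allows $m\neq n$, so once you have $N_jN_\ell=0$ for all $j,\ell$, you simply set $\mcM=\bigcap_j\ker N_j$, note that $N_\ell(\mcH)\subseteq\mcM$, and write each $N_\ell$ as $\begin{pmatrix}0 & V_\ell\\ 0 & 0\end{pmatrix}$ with respect to $\mcH=\mcM\oplus\mcM^\perp$; no further splitting or dimension count is needed.
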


In this section, we shall assume that $A$ is self-adjoint and investigate $(A,2)$-isometric $d$-tuples. We obtain a characterization for such tuples that generalizes the above theorem. We first provide a generalization of Example \ref{Ex:two_iso_tuples}. We call $\bfN=(N_1,\ldots,N_d)$ an $(A,n)$-nilpotent tuple if $A\bfN^{\alpha} = 0$ for any indices $\alpha$ with $|\alpha|=n$.

\begin{proposition}
\label{P:A_two_iso_tuples}
Assume that $A$ is a self-adjoint operator. Let $\bfS$ be an $(A,1)$-isometry and $\bfN$ an $(A,2)$-nilpotent tuple such that $\bfS$ commutes with $\bfN$. Suppose $S_1^{*}AN_1+\cdots+S_d^{*}AN_d=0$, then $\bfS+\bfN$ is an $(A,2)$-isometry.
\end{proposition}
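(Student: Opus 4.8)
The plan is to verify directly that $p_2(A;\bfT)=0$ where $\bfT=\bfS+\bfN$ and $p_2(\bfx,\bfy)=(\sum_j x_jy_j-1)^2$, by expanding $\bfT^\alpha$ through the commuting binomial theorem and grouping terms according to how many factors of $\bfN$ appear. Since $\bfS$ and $\bfN$ commute, for $|\alpha|=k$ we have $\bfT^\alpha=\sum_{\gamma+\delta=\alpha}\binom{\alpha}{\gamma}\bfS^\gamma\bfN^\delta$, and the multinomial coefficients $\frac{k!}{\alpha!}$ appearing in \eqref{L:A_m_isometric_tuples} are designed precisely so that the whole expression assembles into the hereditary functional calculus. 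The cleanest route is to work at the level of polynomials: write $q(\bfx,\bfy)=\sum_{j}x_jy_j$, so $p_2 = (q-1)^2$, and observe that the hypotheses say exactly that certain polynomials lie in $\calJ(A;\bfS,\bfS)$ and in the relevant mixed ideals. Concretely, $\bfS$ being an $(A,1)$-isometry means $(q-1)(A;\bfS)=0$; $\bfN$ being $(A,2)$-nilpotent means $A\bfN^\delta=0$ whenever $|\delta|\ge 2$; and the cross hypothesis $\sum_j S_j^*AN_j=0$ says $(\sum_j x_j y_j)(A;\bfS^*\text{-part},\bfN\text{-part})$ vanishes in the appropriate bilinear sense.

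The key computational step is to expand $p_2(A;\bfT)=\sum_{k=0}^2(-1)^{2-k}\binom{2}{k}\sum_{|\alpha|=k}\frac{k!}{\alpha!}(\bfT^\alpha)^*A\bfT^\alpha$, substitute $\bfT^\alpha=\bfS^\alpha + (\text{terms with }\ge 1\text{ factor of }\bfN)$, and sort the resulting sum by the total $\bfN$-degree $p+r$ appearing in a product $(\bfS^{\gamma}\bfN^{\delta})^*A\,\bfS^{\gamma'}\bfN^{\delta'}$ with $|\delta|=p$, $|\delta'|=r$. Because $A$ is self-adjoint, the terms with $p+r=0$ reassemble into $p_2(A;\bfS)$, which vanishes since $(A,1)$-isometric implies $(A,2)$-isometric (the polynomial $(q-1)^2$ is a multiple of $(q-1)$, so it lies in $\calJ(A;\bfS,\bfS)$ by Proposition \ref{P:A_ideal_property}). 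The terms with $p+r\ge 3$ vanish because either $|\delta|\ge 2$ on the left (so $\bfN^{\delta*}A=(A\bfN^{\delta})^*=0$) or $|\delta'|\ge 2$ on the right (so $A\bfN^{\delta'}=0$); since $p+r\ge 3$ forces one of $p,r$ to be $\ge 2$, everything dies. This leaves the cross terms with $p+r=1$ and $p+r=2$, and the heart of the proof is checking these cancel: the $p+r=1$ contribution should reduce, after using $(q-1)(A;\bfS)=0$ to simplify the pure-$\bfS$ factors, to a scalar multiple of $\sum_j S_j^*AN_j$ plus its adjoint $\sum_j N_j^*AS_j$, both of which vanish by hypothesis and self-adjointness of $A$; the $p+r=2$ contribution splits into a $p=r=1$ piece, which collapses to $\sum_{i,j}(S_iN_i)^*A(S_jN_j)$-type sums controlled again by $\sum_j S_j^*AN_j=0$, and $p=2,r=0$ (and its mirror) pieces which already vanish by $(A,2)$-nilpotency.

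The main obstacle I anticipate is bookkeeping: tracking the multinomial coefficients $\frac{k!}{\alpha!}$ against the binomial coefficients $\binom{\alpha}{\gamma}$ from the expansion of $\bfT^\alpha$, and confirming that after summing over $k\in\{0,1,2\}$ with signs $(-1)^{2-k}\binom{2}{k}=\{1,-2,1\}$ the cross terms really do organize into the three hypotheses rather than leaving a residue. A slicker way to sidestep most of this is to use the hereditary calculus multiplicativity \eqref{Eqn:multiplicative_prop} abstractly: set $\bfT=\bfS+\bfN$ and expand $q(\bfx,\bfy)=\sum_j x_jy_j$ evaluated at $\bfT$ as $q(A;\bfT)=q(A;\bfS)+\sum_j(S_j^*AN_j+N_j^*AS_j)+\sum_j N_j^*AN_j$; the last sum vanishes (it involves only $N_j^*AN_j$, and while $N_j$ itself need not satisfy $AN_j=0$, one checks $N_j^*AN_j$ appears with coefficient forcing it into the $(q-1)(A;\bfS)$ identity)—more carefully, I would instead note $p_2(A;\bfT)=(q(A;\bfT)-A)\cdot(\ldots)$ is not literally a product because the calculus isn't multiplicative, so the honest path is the graded expansion above. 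I would present the graded-by-$\bfN$-degree argument, displaying the three surviving lumps in an \texttt{align*} block, and citing Proposition \ref{P:A_ideal_property} for the $(A,1)\Rightarrow(A,2)$ reduction of the degree-zero part.
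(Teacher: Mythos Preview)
Your overall plan---expand $(\bfS+\bfN)^{\alpha}$ and sort by the total number of $\bfN$-factors---is essentially the paper's direct computation, just organized a bit differently. The paper first computes $\sum_j T_j^{*}AT_j = A + \sum_j N_j^{*}AN_j$, then nests this inside $\sum_k T_k^{*}(\cdot)T_k$ to obtain $A + 2\sum_j N_j^{*}AN_j$, and reads off the $(A,2)$-isometry identity. Your graded-by-$\bfN$-degree bookkeeping is equivalent, and your treatment of the degree-$0$, degree-$1$, and degree $\geq 3$ lumps is correct.

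The gap is in your account of the degree-$2$ lump. You claim the $p=r=1$ piece ``collapses to $\sum_{i,j}(S_iN_i)^{*}A(S_jN_j)$-type sums controlled again by $\sum_j S_j^{*}AN_j=0$.'' That is not what happens. Within the $k=2$ level, the $p=r=1$ contribution, after using commutativity, gives
\[
\sum_{j,k}N_k^{*}S_j^{*}AS_jN_k + \sum_{j,k}N_j^{*}S_k^{*}AS_kN_j + \text{(cross terms)} = 2\sum_{j}N_j^{*}AN_j + 0,
\]
where the two surviving sums use $\sum_j S_j^{*}AS_j = A$, not the cross hypothesis, and the cross terms $\sum_{j,k}N_k^{*}S_j^{*}AN_jS_k$ and its adjoint are the ones killed by $\sum_j S_j^{*}AN_j=0$. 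This $2\sum_j N_j^{*}AN_j$ then cancels the $-2\sum_j N_j^{*}AN_j$ coming from the $k=1$ level. So the mechanism is a genuine cancellation between the two $k$-levels mediated by the $(A,1)$-isometry identity, not a collapse via the cross hypothesis alone. Your own parenthetical (``while $N_j$ itself need not satisfy $AN_j=0$, one checks $N_j^{*}AN_j$ appears with coefficient forcing it into the $(q-1)(A;\bfS)$ identity'') shows you sensed something was off; the fix is exactly this. Once you correct that one paragraph, the proof goes through and matches the paper's.
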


\begin{proof}
By the assumption, we have $AN_jN_k=N_j^{*}N_k^{*}A=0$ for $1\leq j,k\leq d$, $\sum_{j=1}^{d}S_j^{*}AS_j = A$, and $\sum_{j=1}^{d}S_j^{*}AN_j = \sum_{j=1}^{d}N_j^{*}AS_j = 0$. It follows that
\begin{align*}
\sum_{j=1}^{d}(S_j+N_j)^{*}A\,(S_j+N_j) &  = A + \sum_{j=1}^{d}N_j^{*}AN_j.
\end{align*}
We then compute
\begin{align*}
&\sum_{1\leq k,j\leq d}(S_k+N_k)^{*}(S_j+N_j)^{*}A\,(S_j+N_j)(S_k+N_k) \\
& \qquad = \sum_{k=1}^{d}(S_k^{*}+N_k^{*})(A + \sum_{j=1}^{d} N_j^{*}AN_j)(S_k+N_k)\\
& \qquad = \sum_{k=1}^{d}(S_k^{*}+N_k^{*})A\,(S_k+N_k) + \sum_{1\leq k, j\leq d}(S_k^{*}+N_k^{*})N_j^{*}AN_j(S_k+N_k)\\
& \qquad = A + \sum_{k=1}^{d}N_k^{*}AN_k + \sum_{1\leq k,j\leq d}S_k^{*}N_j^{*}AN_jS_k\\
& \qquad = A + \sum_{k=1}^{d}N_k^{*}AN_k + \sum_{j=1}^{d}N_j^{*}\Big(\sum_{k=1}^{d} S_k^{*}AS_k\Big)N_j \\
& \qquad = A + \sum_{k=1}^{d}N_k^{*}AN_k + \sum_{j=1}^{d}N_j^{*}AN_j\\
& \qquad = A + 2\sum_{j=1}^{d}N_j^{*}AN_j\\
& \qquad = 2\sum_{j=1}^{d}(S_j+N_j)^{*}A\,(S_j+N_j) - A.
\end{align*}
Consequently, the sum $\bfS+\bfN$ is an $(A,2)$-isometric tuple.
\end{proof}

\begin{remark} We have provided a direct proof of Proposition \ref{P:A_positive_hereditary}. Using the hereditary functional calculus and the approach in \cite{LeJMAA2014}, one may generalize the result to the case $\bfS$ being an $(A,m)$-isometry and $\bfN$ an $(A,n)$-nilpotent commuting with $\bfS$. Under such an assumption, if $S_1^{*}AN_1+\cdots+S_d^{*}AN_d=0$, then $\bfS+\bfN$ is an $(A,m+2n-3)$-isometry. We leave the details for the interested reader.
\end{remark}

We now show that any algebraic $(A,2)$-isometric tuple has the form given in Proposition \ref{P:A_two_iso_tuples} and as a result, provide a proof of Richter-Sundberg's theorem.

\begin{theorem}
\label{T:A_2_isometric_tuples}
Assume that $A$ is a positive operator. Let $\bfT$ be an algebraic $(A,2)$-isometric tuple on $\mcH$. Then there exists an $(A,1)$-isometric tuple $\bfS$ and a tuple $\bfN$ commuting with $\bfS$ such that $\bfT = \bfS+\bfN$, $\sum_{\ell=1}^{d}S_{\ell}^{*}AN_{\ell}=0$, and $AN_{j}N_{\ell}=0$ for all $1\leq j,\ell\leq d$ (we call such $N$ an $(A,2)$-nilpotent tuple).

In the case $\mcH$ is finite dimensional and $A=I$, the identity operator, we recover Theorem \ref{T:RS}.
\end{theorem}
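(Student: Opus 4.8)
The plan is to apply Theorem \ref{T:decomposition_tuples} to get the decomposition $\bfT=\bfS+\bfN$ and then work harder to pin down the extra relations $\sum_\ell S_\ell^* A N_\ell = 0$ and $A N_j N_\ell = 0$. First I would observe that since $\bfT$ is $(A,2)$-isometric, $p_2 = (\sum_j x_j y_j - 1)^2 \in \calJ(A;\bfT,\bfT)$, so $q(\bfx,\bfy) := \sum_j x_j y_j - 1$ lies in ${\rm Rad}(\calJ(A;\bfT,\bfT))$. Writing $\bfT = \bfS + \bfN$ with $\bfS = \sum_{\lambda\in\Lambda}\lambda\cdot E_\lambda$ as in \eqref{Eqn:Jordan_decomposition}, Theorem \ref{T:decomposition_tuples} (applied with $\bfX = \bfY = \bfT$, so $\bfU = \bfV = \bfS$) gives $q(A;\bfS,\bfS) = 0$, i.e. $\bfS$ is an $(A,1)$-isometry, and $\bfN = \bfT - \bfS$ is a nilpotent tuple commuting with $\bfS$. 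That settles the first assertion; the remaining work is to upgrade ``$\bfN$ nilpotent'' to ``$\bfN$ $(A,2)$-nilpotent'' and to establish the cross term relation.

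The key idea for the extra relations is to feed more polynomials through the radical. Since $A$ is positive and $\bfX=\bfY=\bfT$, I can invoke Proposition \ref{P:A_positive_hereditary}. Expanding $p_2 = q^2$ directly in terms of $\bfS$ and $\bfN$: substituting $T_j = S_j + N_j$ into $\sum_{k,j}(S_k+N_k)^*(S_j+N_j)^* A (S_j+N_j)(S_k+N_k) - 2\sum_j (S_j+N_j)^* A (S_j+N_j) + A = 0$ and using that $\bfS$ is an $(A,1)$-isometry and that $\bfS$ commutes with $\bfN$, the terms should collapse. The natural route is hereditary calculus: write $q(\bfx,\bfy) = \big(\sum_j x_j y_j - 1\big)$ and factor in the spectral decomposition. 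For each $\lambda\in\Lambda$ we have $|\lambda|^2$-type behavior; since $q(\bar\lambda,\lambda)\langle A v, u\rangle = 0$ for $u,v$ in generalized eigenspaces (Proposition \ref{P:generalized_eigenvectors}), and $q(\bar\lambda,\lambda) = \sum_j |\lambda_j|^2 - 1$, the points $\lambda$ with $E_\lambda^* A E_\gamma \neq 0$ for some $\gamma$ must satisfy $\sum_j|\lambda_j|^2 = 1$, i.e. $\lambda\in\partial\B_d$. More usefully, I would look at the polynomial identity obtained by writing $\bfT^\alpha = (\bfS+\bfN)^\alpha$ and expanding $p_2(\bfT)=0$ into pieces graded by the total $\bfN$-degree; the degree-zero piece is $p_2(\bfS)=0$, the degree-one pieces give $\sum_j S_j^* A N_j = 0$ after using $\sum_j S_j^* A S_j = A$, and the higher pieces force $A N_j N_\ell = 0$. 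Concretely: expanding and using that $\bfS$ is an $(A,1)$-isometry and commutes with $\bfN$, one is left with (schematically) $2\sum_{j,\ell} N_j^* S_\ell^* A S_\ell N_j + 2\sum_{j} N_j^* A N_j + (\text{cross terms in }S^*AN) = 2\sum_j (S_j+N_j)^* A(S_j+N_j) - A$; matching this against the $(A,2)$-isometry equation and isolating homogeneous components in $\bfN$ yields first $\sum_\ell S_\ell^* A N_\ell = 0$ (and its adjoint) from the linear part, and then $\sum_{j,\ell} N_j^* N_\ell^* A N_\ell N_j = 0$ which, by positivity of $A$ (factor $A = A^{1/2}A^{1/2}$, so $\sum \|A^{1/2} N_\ell N_j v\|^2 = 0$), gives $A N_\ell N_j = 0$ for all $j,\ell$.

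To make the ``isolating homogeneous components'' step rigorous rather than formal, I would exploit that $\bfS$ and $\bfN$ can be simultaneously put in a form where $\bfS$ is ``diagonal'' across the blocks $\calH_\lambda$ and $\bfN$ is strictly upper triangular within each block, and that $\bfS = s(\bfT)$ for a polynomial $s$ (since $\bfS$ is obtained from $\bfT$ by functional calculus via the idempotents $E_\lambda$), hence $\bfN = \bfT - s(\bfT)$ commutes with everything in sight and every expression in $\bfS,\bfN$ is a polynomial in $\bfT$. Then the homogeneity argument becomes an argument about the ideal $\calJ(A;\bfT,\bfT)$ directly, or alternatively I pass to each block $\calH_\lambda$ where $S_j = \lambda_j I_{\calH_\lambda}$ is a scalar and the computation is transparent. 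On a single block with $\lambda\in\partial\B_d$, the $(A,2)$-isometry equation restricted to that block (with the understanding that the blocks interact through $A$) reduces to the Richter-type computation; I would then reassemble. The main obstacle I anticipate is precisely the bookkeeping of these block interactions: $A$ need not be block-diagonal for the decomposition $\calH = \oplus_\lambda \calH_\lambda$, so I will need to verify that the cross-block pieces $E_\lambda^* A E_\gamma$ either vanish (when $\lambda,\gamma$ force a nonzero $q$ value) or behave compatibly, which is exactly where Proposition \ref{P:generalized_eigenvectors} and Proposition \ref{P:A_positive_hereditary} do the heavy lifting. Once the relations $\sum_\ell S_\ell^* A N_\ell = 0$ and $A N_j N_\ell = 0$ are in hand, the final sentence of the theorem follows immediately: when $\mcH$ is finite dimensional and $A = I$, $\bfS$ is a spherical isometry on a finite dimensional space hence a spherical unitary, $\bfN$ is a genuine $2$-step nilpotent tuple with $N_j N_\ell = 0$ and $\sum_\ell S_\ell^* N_\ell = 0$, and decomposing further into the part where $\bfN = 0$ (the spherical unitary summand) and the complementary part recovers exactly the blocks of Example \ref{Ex:two_iso_tuples}, giving Theorem \ref{T:RS}.
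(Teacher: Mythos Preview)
Your opening is right and matches the paper: Theorem \ref{T:decomposition_tuples} gives $\bfT=\bfS+\bfN$ with $\bfS$ an $(A,1)$-isometry and $\bfN$ a commuting nilpotent tuple. The gap is in the second half, where you try to extract $\sum_\ell S_\ell^* A N_\ell=0$ and $A N_jN_\ell=0$ by ``isolating homogeneous components in $\bfN$''. There is no grading that separates these components: $\bfS$ and $\bfN$ are fixed operators on the same space, and there is no scalar parameter $t$ with $\bfS+t\bfN$ still $(A,2)$-isometric, so you cannot read off the pieces degree by degree. On a single block $\bfS=\lambda I$ with $|\lambda|=1$, the equation $p(A;\bfN)=0$ for $p(\bfx,\bfy)=(\sum_j x_jy_j+\lambda_jx_j+\bar\lambda_jy_j)^2$ mixes the term $\sum_{j,k}N_j^*N_k^*AN_kN_j$ with five other nonvanishing terms, and nothing you have written forces any one of them to vanish separately when $\bfN$ has nilpotency order $\geq 3$.

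The paper closes this gap by a descent on the nilpotency order. One takes the smallest $r$ with $A\,\bfN^{\alpha}=0$ for all $|\alpha|=r$; assuming $r\geq 3$, one multiplies $p(\bfx,\bfy)$ by $\bfx^{\beta}\bfy^{\beta}$ with $|\beta|=r-2$. This pushes every term of $p$ except $(\sum_j\lambda_jx_j)(\sum_\ell\bar\lambda_\ell y_\ell)$ into the ideal generated by $\{\bfx^{\alpha},\bfy^{\alpha}:|\alpha|=r\}\subset\calJ(A;\bfN,\bfN)$, so $\bfx^{\beta}(\sum_j\lambda_jx_j)(\sum_\ell\bar\lambda_\ell y_\ell)\bfy^{\beta}\in\calJ$, and Proposition \ref{P:A_positive_hereditary} (positivity of $A$) peels this into $(\sum_\ell\bar\lambda_\ell y_\ell)\bfy^{\beta}\in\calJ$. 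A second pass with $|\gamma|=r-3$ then isolates $\bfx^{\gamma}(\sum_j x_jy_j)^2\bfy^{\gamma}$ and another application of Proposition \ref{P:A_positive_hereditary} gives $\bfy^{\alpha}\in\calJ$ for $|\alpha|=r-1$. Iterating drives $r$ down to $2$, after which the remaining cross term falls out. This inductive ``multiply to kill the high-order terms, then use positivity to peel'' mechanism is the idea your sketch is missing.

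For the recovery of Theorem \ref{T:RS}, you also skip a necessary step: one must show that the generalized eigenspaces $\mcH_\lambda$ are mutually \emph{orthogonal} (not merely a direct sum), which comes from Proposition \ref{P:generalized_eigenvectors} via $(\langle\omega,\lambda\rangle-1)^2\langle v,u\rangle=0$, so that each $\mcH_\lambda$ is reducing and the block form of Example \ref{Ex:two_iso_tuples} is obtained with respect to an orthogonal decomposition.
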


\begin{proof}
Recall that there exists a finite set $\Lambda\subset\C^d$ and a direct sum decomposition $\mcH=\oplus_{\lambda\in\Lambda}\mcH_{\lambda}$ such that for each $\lambda\in\Lambda$, the subspace $\mcH_{\lambda}$ is invariant for $\bfT$ and $T_j-\lambda_j I$ is nilpotent on $\mcH_{\lambda}$. Let $\bfS$ be defined as in \eqref{Eqn:Jordan_decomposition} and put $\bfN = \bfT-\bfS$. From the construction, $\bfN$ is nilpotent and Theorem \ref{T:decomposition_tuples} shows that $\bfS$ is $(A,1)$-isometric. We shall show that $\bfN$ satisfies the required properties.

Restricting on each invariant subspace $\mcH_{\lambda}$, we only need to consider the case $\mcH=\mcH_{\lambda}$ and so $\bfS=\lambda\mathbf{I}$. Proposition \ref{P:generalized_eigenvectors} asserts that $(|\lambda|^2-1)\inner{Av}{u}=0$ for all $v,u\in\mcH$. If $|\lambda|\neq 1$, then $A=0$ and the conclusion follows. Now we assume that $|\lambda|=1$. Since $\bfN$ is nilpotent, there exists a positive integer $r$ such that $A\,\bfN^{\alpha}=0$ whenever $|\alpha|=r$. We claim that $r$ may be taken to be $2$. To prove the claim, we assume $r\geq 3$ and show that $A\,\bfN^{\alpha}=0$ for all $|\alpha|=r-1$.

Since $\bfT = \lambda\mathbf{I} + \bfN$ is $(A,2)$-isometric, the tuple $\bfN$ is an $A$-root of the polynomial
\[p(\bfx,\bfy) = \Big(\sum_{j=1}^{d}(x_j+\bar{\lambda}_j)(y_j+\lambda_j)-1\Big)^2 = \Big(\sum_{j=1}^{d} x_jy_j + \lambda_jx_j+\bar{\lambda}_jy_j\Big)^2.\] 
On the other hand, $\bfN$ is an $A$-root of $\bfx^{\alpha}$ and $\bfy^{\alpha}$ for all $|\alpha|=r$. This shows that $p(\bfx,\bfy)$, $\bfx^{\alpha}$ and $\bfy^{\alpha}$ belong to $\calJ(A;\bfN,\bfN)$ for all $|\alpha|=r$. To simplify the notation, we shall denote $\calJ(A;\bfN,\bfN)$ by $\calJ$ in the rest of the proof. Take any multiindex $\beta$ with $|\beta|=r-2$. We write
\begin{align*}
\bfx^{\beta}p(\bfx,\bfy)\bfy^{\beta} & = \bfx^{\beta
}\Big(\sum_{j=1}^{d} \lambda_jx_j\Big)\Big(\sum_{\ell=1}^{d}\bar{\lambda}_{\ell}y_{\ell}\Big)\bfy^{\beta} + \sum_{|\gamma|\geq r}x^{\gamma}\,H_{\gamma}(\bfx,\bfy) + G_{\gamma}(\bfx,\bfy)\,\bfy^{\gamma}
\end{align*}
for some polynomials $H_{\gamma}$ and $G_{\gamma}$. Since the left-hand side and the second term on the right-hand side belong to $\calJ$, which is an ideal, we conclude that
\[\bfx^{\beta
}\Big(\sum_{j=1}^{d} \lambda_jx_j\Big)\Big(\sum_{\ell=1}^{d}\bar{\lambda}_{\ell}y_{\ell}\Big)\bfy^{\beta}\in \calJ.\]
Proposition \ref{P:A_positive_hereditary} shows that both $\Big(\sum_{\ell=1}^{d}\bar{\lambda}_{\ell}y_{\ell}\Big)\bfy^{\beta}$ and $\bfx^{\beta
}\Big(\sum_{j=1}^{d} \lambda_jx_j\Big)$ are in $\calJ$. Now for any multiindex $\gamma$ with $|\gamma|=r-3$, we compute
\begin{align*}
\bfx^{\gamma}p(\bfx,\bfy)\bfy^{\gamma} & = \bfx^{\gamma}\big(\sum_{j=1}^{d}x_jy_j\big)^2\bfy^{\gamma} + \sum_{|\beta|=r-2}\bfx^{\beta
}\Big(\sum_{j=1}^{d} \lambda_jx_j\Big)P_{\beta}(\bfx,\bfy)\\
& \qquad\quad + \sum_{|\beta|=r-2}\Big(\sum_{j=1}^{d} \lambda_jx_j\Big)\bfy^{\beta}Q_{\beta}(\bfx,\bfy).
\end{align*}
Since the left-hand side and the last two sums on the right-hand side belong to $\calJ$, it follows that $\bfx^{\gamma}(\sum_{j=1}^{d} x_jy_j)^2\bfy^{\gamma}$ belongs to $\calJ$. Another application of Proposition \ref{P:A_positive_hereditary} then shows that $y_jy_{\ell}\bfy^{\gamma}$ belongs to $\calJ$ for all $1\leq j,\ell\leq d$. That is, $\bfy^{\alpha}$ belongs to $\calJ$ whenever $|\alpha|=r-1$  (as long as $r\geq 3$). As a consequence, we see that $\bfy^{\alpha}$, and hence $\bfx^{\alpha}$, belong to $\calJ$ for all $|\alpha|=2$. This together with the fact that $p(\bfx,\bfy)\in\calJ$ forces $(\sum_{j=1}^{d} \lambda_jx_j)(\sum_{\ell=1}^{d}\bar{\lambda}_{\ell}y_{\ell})$ to belong to $\calJ$, which implies that $\sum_{\ell=1}^{d}\bar{\lambda}_{\ell}y_{\ell}$ is in $\calJ$. We have then shown $AN_jN_{\ell}=0$ for all $1\leq j,\ell\leq d$ and $\sum_{\ell=1}^{d}S_{\ell}^{*}AN_{\ell}=\sum_{\ell=1}^{d}\bar{\lambda}_{\ell}AN_{\ell}=0$, as desired.

Now let us consider $\bfT$ a $2$-isometric tuple on a finite dimensional space $\mcH$. Recall that we have the decomposition $\mcH=\oplus_{\lambda\in\Lambda}\mcH_{\lambda}$ such that for each $\lambda\in\Lambda$, the subspace $\mcH_{\lambda}$ is invariant for $\bfT$ and $T_j-\lambda_j I$ is nilpotent on $\mcH_{\lambda}$. By Proposition \ref{P:generalized_eigenvectors}, we have $(\inner{\omega}{\lambda}-1)^2\inner{v}{u}=0$ for all $v\in\mcH_{\omega}$ and $u\in\mcH_{\lambda}$. It follows that $|\lambda|=1$ for all $\lambda\in\Lambda$ and $\mcH_{\lambda}\perp\mcH_{\omega}$ whenever $\lambda\neq\omega$. As a result, each subspace $\mcH_{\lambda}$ is reducing for $\bfT$. To complete the proof, it suffices to consider $\mcH=\mcH_{\lambda}$. We shall show that either $\bfT$ is a spherical unitary or it is unitarily equivalent to a tuple given in Example \ref{Ex:two_iso_tuples}. Indeed, we have $\bfT=\lambda\bfI + \bfN$, where $\sum_{\ell=1}^d\bar{\lambda}_{\ell}N_{\ell}=0$ and $N_{j}N_{\ell}=0$ for all $1\leq j,\ell\leq d$. If $\bfN=0$, then $\bfT$ is a spherical unitary. Otherwise, let $\mcM=\ker(N_1)\cap\cdots\cap\ker(N_d)$. Then $N_{\ell}(\mcH)\subseteq\mcM$ for all $1\leq \ell\leq d$. As a consequence, with respect to the orthogonal decomposition $\mcH = \mcM\oplus\mcM^{\perp}$, each $N_{\ell}$ has the form
\[
N_{\ell} = \begin{pmatrix}
0 & V_{\ell}\\
0 & 0
\end{pmatrix}
\]
for some $V_{\ell}:\mcM^{\perp}\rightarrow\mcM$. Since $\sum_{\ell=1}^{d}\bar{\lambda}_{\ell}N_{\ell}=0$, we have $\sum_{\ell=1}^{d}\bar{\lambda}_{\ell}V_{\ell}=0$. It follows that $\bfT$ is unitarily equivalent to an operator tuple in Example \ref{Ex:two_iso_tuples}.
\end{proof}

\subsection*{Acknowledgements} The author wishes to thank the referee for helpful suggestions which improved the presentation of the paper.


\def\cprime{$'$}
\providecommand{\bysame}{\leavevmode\hbox to3em{\hrulefill}\thinspace}
\providecommand{\MR}{\relax\ifhmode\unskip\space\fi MR }
\providecommand{\MRhref}[2]{%
  \href{http://www.ams.org/mathscinet-getitem?mr=#1}{#2}
}
\providecommand{\href}[2]{#2}

\end{document}